\documentclass{amsart}
\usepackage{amsmath}

\usepackage{amssymb}
\usepackage{amsthm}
\usepackage{amscd,amsbsy}
\usepackage{mathtools}
\usepackage{xcolor}
\usepackage{tikz-cd}
\usetikzlibrary{arrows,matrix}
\usepackage{float}
\usepackage{mathtools}
\usepackage{enumitem}
\usepackage{mathrsfs}  
\usepackage{stmaryrd}

\newtheorem{theorem}{Theorem}[section]
\newtheorem{lemma}[theorem]{Lemma}

\newtheorem{corollary}[theorem]{Corollary}

\theoremstyle{definition}

\newtheorem{definitions and remarks}[theorem]{Definitions and Remarks}

\theoremstyle{remark}

\newtheorem{remark}[theorem]{Remark}

\numberwithin{equation}{section}

\newcommand{\inv}{\mathrm{inv}}
\newcommand{\ATWinv}{\mathrm{ATWinv}}

\newcommand{\supp}{\mathrm{supp}\,}

\newcommand{\ord}{\mathrm{ord}}

\newcommand{\nc}{\mathrm{nc}}

\newcommand{\quot}{\mathrm{quot}}

\newcommand{\lex}{\mathrm{lex}\,}

\newcommand{\al}{{\alpha}}
\newcommand{\be}{{\beta}}

\newcommand{\ep}{{\varepsilon}}

\newcommand{\ga}{{\gamma}}

\newcommand{\s}{{\sigma}}

\newcommand{\IN}{{\mathbb N}}

\newcommand{\IK}{{\mathbb K}}
\newcommand{\IL}{{\mathbb L}}

\newcommand{\IZ}{{\mathbb Z}}

\newcommand{\tb}{{\widetilde b}}

\newcommand{\tX}{{\widetilde X}}

\newcommand{\ts}{{\widetilde \sigma}}

\newcommand{\wxi}{{\widehat \xi}}

\newcommand{\wx}{{\widehat x}}

\newcommand{\llb}{{\llbracket}}
\newcommand{\rrb}{{\rrbracket}}
\newcommand{\llbr}{{(\!(}}
\newcommand{\rrbr}{{)\!)}}

\newcommand{\RN}[1]{%
  \textup{\uppercase\expandafter{\romannumeral#1}}%
}

\begin{document}
\title[Formal splitting and stack-theoretic nc desingularization]
{Formal splitting and stack-theoretic normal crossings desingularization}
\author[A.~Belotto da Silva]{Andr\'e Belotto da Silva}
\author[F.~Bernard]{Fran\c{c}ois Bernard}
\author[E.~Bierstone]{Edward Bierstone}
\address[A.~Belotto da Silva]{Universit\'e Paris Cit\'e and Sorbonne Universit\'e, UFR de Math\'ematiques, Institut de Math\'ematiques 
de Jussieu-Paris Rive Gauche, UMR7586, F-75013 Paris, France, and Institut universitaire de France (IUF)}
\email{andre.belotto@imj-prg.fr}
\address[F.~Bernard]{University of Toronto, Department of Mathematics, 40 St. George Street, Toronto, ON, Canada M5S 2E4}
\email{francois.bernard@utoronto.ca}
\address[E.~Bierstone]{University of Toronto, Department of Mathematics, 40 St. George Street, Toronto, ON, Canada M5S 2E4}
\email{bierston@math.utoronto.ca}
\thanks{Research supported by Agence Nationale de la
Recherche (ANR) projects ANR-25-ERCC-0003-01, ANR-22-CE40-0014 (Belotto da Silva), and
NSERC Discovery Grant RGPIN-2017-06537 (Bierstone, Bernard)}
\date{\today}

\subjclass[2020]{Primary 14B05, 14E05, 32S45; Secondary 14E15, 32S05}

\begin{abstract}
We show that stack-theoretic resolution of singularities preserving normal crossings (partial desingularization)
by weighted blowings-up, can be obtained in a simple direct way from a splitting theorem of the first and third authors, using the algorithm
of Abramovich, Temkin and W{\l}odarczyk for resolution of singularities by weighted blowings-up.
\end{abstract}

\maketitle

\setcounter{tocdepth}{1}
\tableofcontents

\section{Introduction}\label{sec:intro}

The purpose of this note is to show that stack-theoretic resolution of singularities preserving normal crossings (\emph{partial desingularization})
by weighted blowings-up, can be obtained in a simple direct way from the splitting theorem \cite[Thm.\,1.1]{BB}, using the algorithm
of Abramovich, Temkin and W{\l}odarczyk \cite{ATW19} for resolution of singularities by weighted blowings-up.

\begin{theorem}\label{thm:pardes}
Let $X\subset Z$ ($Z$ smooth) denote an embedded variety over an uncountable algebraically closed field $\IK$ of characteristic zero.
Then, for any positive integer $k$, there is a stack-theoretic resolution of singularities $\ts: \tX \to X$ preserving the locus of normal crossings
points of $X$ of order $\leq k$; i.e.,
\begin{enumerate}
\item $\tX$ (as a stack) has only normal crossings points of order $\leq k$;

\smallskip
\item over the open subset of $X$ of normal crossings points of order $\leq k$, 
$\ts$ is \'etale and the associated modification $\s: X' = \tX_\quot \to X$ is an isomorphism.
\end{enumerate}
\end{theorem}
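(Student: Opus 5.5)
The plan is to run the weighted-blowing-up algorithm of \cite{ATW19}, with one change: the centre is modified so that it never meets normal crossings points of order $\leq k$. The splitting theorem \cite[Thm.\,1.1]{BB} supplies the local structure needed to see that this is possible.

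First, I would set up the local picture at a normal crossings point. At a point $a\in X$ the completed local ring is $\widehat{\mathcal O}_{X,a}\cong \IK\llb x_1,\dots,x_n\rrb/(f)$. The point $a$ is a normal crossings point of order $\leq k$ precisely when, after a formal coordinate change, $f=x_1\cdots x_m$ with $m\leq k$.

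The splitting theorem gives a formal decomposition of $f$ near $a$ into a product of factors, each depending on separate variables. Using it, I would compute the invariant $\ATWinv$ of \cite{ATW19} along the normal crossings locus. It is determined by the number $m$ of branches, together with the dimension data. The maximal-contact/weighted data are formal invariants, so formal coordinates suffice to compute them. This is also where the hypothesis that $\IK$ is uncountable enters: it lets one pass from formal splitting to statements about algebraic or étale-local invariants, via a Baire-type genericity argument.

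Next, I would define a modified invariant $\inv$. It equals $\ATWinv$ at points that are not normal crossings of order $\leq k$. At normal crossings points of order $\leq k$ it takes a value smaller than every value $\ATWinv$ takes at non-nc points; for instance, assign the nc points the minimal value, as if they were smooth.

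To run the algorithm with $\inv$, three things must be checked.
\begin{enumerate}
\item $\inv$ is upper semicontinuous. The nc locus of order $\leq k$ is open, so this reduces to upper semicontinuity of $\ATWinv$ off that open set.
\item The maximum locus of $\inv$ on the non-nc set is the ATW centre $J$ (a weighted centre). This centre is disjoint from the nc points, because the nc points have strictly smaller invariant.
\item After the weighted blowing-up along $J$, $\inv$ drops strictly over the centre and is unchanged elsewhere. In particular the nc points of order $\leq k$, which lie off the centre, are untouched.
\end{enumerate}
For (iii), the new nc points created on the stack also need attention. Their ATW invariant can be larger than that of a smooth point, but they are legitimately allowed in $\tX$, so $\inv$ simply reassigns them the minimal value.

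Termination then follows from the well-ordering of ATW invariants. The process stops when every point has minimal $\inv$, that is, when every point is smooth or nc of order $\leq k$; this gives (1). Property (2) holds because the centres never meet the nc locus of order $\leq k$: there $\ts$ is étale and the coarse space map $\s$ is an isomorphism.

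The main obstacle is the compatibility in (i)–(ii): showing that the ATW centre at a non-nc point never has nc points of order $\leq k$ in its closure with the same or larger invariant. Equivalently, one must show that $\ATWinv$ at an nc point of order $m$ is strictly less than $\ATWinv$ at any non-nc point of the same hypersurface stratum that specializes it.

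My first attempt would be a direct comparison. The splitting theorem says that near a point whose invariant equals the nc invariant $(m,\dots)$, the formal defining function splits as the product of $m$ smooth transversal factors, and so the point is nc. Hence any point with invariant at least the nc value, but not nc, has strictly larger invariant. This makes the nc value a strict threshold and gives the needed semicontinuity.
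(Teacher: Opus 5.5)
Your overall architecture (run the ATW algorithm while exempting normal crossings points of order $\le k$) is essentially the paper's, which inducts on $k$, blows up until $\max\ATWinv\le\ATWinv(\nc(k))$, and then treats $S=\{\ATWinv=\ATWinv(\nc(k))\}$. You also locate the difficulty correctly. But the step you use to resolve it is false, and it is not what the splitting theorem says.

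\emph{The threshold claim fails.} A point with $\ATWinv$ equal to $\ATWinv(\nc(k))$ need not be nc. The cone $x_1^3+x_2^3+x_3^3$ has $\ATWinv=(3,3,3)=\ATWinv(x_1x_2x_3)$, but it is not nc, since its tangent cone is not a product of linear forms. So the nc value is not a strict threshold. What is true, and what your step (ii) needs, is Corollary \ref{cor:clopen}: the $\nc(k)$ locus is open \emph{and closed} in $S$.

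\emph{Why (ii) needs this.} Step (i) is fine: lowering an upper semicontinuous function to its minimum on an open set keeps it upper semicontinuous. The problem is that the weighted centre $J$ is determined by $\ATWinv$, not by your $\inv$. Near a point where $c$ is the local maximum, the support of $J$ is the whole smooth locus $\{\ATWinv=c\}$. You cannot delete the nc points from it unless they form a union of connected components. Clopenness is exactly what guarantees this: components of $S$ that are nowhere nc get blown up, and the others are left alone.

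\emph{Proving closedness is the real content.} Theorem \ref{thm:split} does not decompose $f$ into factors in separate variables; that is just the definition of nc. It assumes $f$ is in Weierstrass form along $S$ and already splits off an snc divisor $E$ transverse to $S$. It concludes $f=\prod(z+b_i)$ only after blowings-up with centres $\{z=x=w_j=0\}$, and with roots $b_i\in\IK\llb w^{1/p},u,x\rrb$. To get nc you must:
\begin{enumerate}
\item arrange $X$ generically $\nc(k)$ on the component, and resolve the nonsplitting locus into such an $E$;
\item at points of codimension-one strata, use $\ATWinv=(k,\ldots,k)$ to show that the linear coefficients $b_{ij}$ have no negative powers of $w$ and that $(b_{ij}(0,0))_{i\ne i_0}$ is invertible (Lemmas \ref{lem:negpower}, \ref{lem:homog});
\item use the $\mu_p$-action on the roots to force $p=1$ (Lemma \ref{lem:p1});
\item remove the $x/w^q$ dependence, either by Lemma \ref{lem:q0} or by the blowings-up of Section \ref{sec:pardes};
\item use purity of the branch locus (Remark \ref{rem:codim2}) to pass from codimension one to all of $S$.
\end{enumerate}
Your proposal contains none of these steps.

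Two smaller points. You should first reduce to the case where $X$ is a hypersurface. Also, uncountability of $\IK$ enters only inside the proof of the splitting theorem, not through a Baire-type argument you would need to supply.
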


In particular, the quotient variety $X'$ has singularities in addition to normal crossings that are \'etale isomorphic to
quotients of normal crossings singularities by actions of finite abelian groups.

We are mainly interested in the case $k = \dim X +1$, so that $\s$ is an isomorphism over the entire normal crossings locus of $X$,
but the statement involving $k$ is useful for a proof by induction. 

We recall that a  \emph{normal crossings} singularity $\nc(k)$ (of \emph{order} $k$)
is defined by a monomial equation $x_1\cdots x_k = 0$, where $x_1,\ldots,x_k$ are formal or analytic
coordinates, as opposed to the more restrictive notion of \emph{simple normal crossings}, where $x_1,\ldots,x_k$ form part of
a system of regular parameters. (A normal crossings singularity at a given point of an algebraic variety $X$ is simple
normal crossings in an \'etale neighbourhood.)

The statement of Theorem \ref{thm:pardes} can be strengthened in various ways (e.g., functoriality, preservation of a simple normal crossings
divisor) or adapted to complex analytic varieties.
In particular, \cite{ABTW}, \cite{Wlodar} can be used in lieu of  \cite{ATW19} to account for a simple normal crossings divisor.
We do not go into
such generalizations because the focus here is on how to use the splitting theorem. The proof of the latter in \cite{BB} requires an
uncountable algebraically closed field.

The splitting theorem is used in \cite{BB} to obtain partial desingularization results where the aim is to give more
precise information on the nature of the quotient singularities. The article \cite{BB}
introduces a class of \emph{group-circulant singularities}
generalizing the classical Whitney umbrella, which cannot in general be eliminated from $X'$, when $\s: X' \to X$ is a proper
birational morphism preserving the normal crossings locus of $X$. See \cite[Thm.\,1.7]{BB}. Weighted blowing up of group
circulant singularities provides stack-theoretic normal crossings desingularization of $X$ \cite[Thm.\,1.13]{BB} (the latter is formulated in
the language of orbifolds). Theorem \ref{thm:pardes} above is essentially \cite[Thm.\,1.13]{BB} with only items (1)--(4) of the latter,
while item (5) deals with group-circulant singularities.

In this note, we show that Theorem \ref{thm:pardes} (or \cite[Theorem 1.13\,(1)--(4)]{BB} can be obtained as a simple consequence
of the splitting theorem (which itself is proved in less than three pages in \cite{BB}). 
We refer to \cite{BB} for the history of the problem and previous results.
Different approaches to Theorem \ref{thm:pardes} have been posted
by W{\l}odarczyk \cite{WlodarNC} and proposed by Abramovich and Temkin \cite{AT}.

\section{The splitting theorem}\label{sec:split}
Let $f$ denote a regular function on a smooth affine variety $Z$. Suppose $f$ has order
$k \in \IN$ on a smooth subvariety $S$ of $Z$, and $E$ is an snc (simple normal crossings)
divisor transverse to $S$.

Given $a_0 \in S$, there is an \'etale neighbourhood of $a_0$ in $Z$ with coordinates
\begin{equation}\label{eq:coords}
(w,u,x,z) = (w_1,\ldots,w_r, u_1,\ldots,u_s, x_1,\ldots,x_{k-1},z)
\end{equation}
in which $\{w_j=0\}$, $j=1,\ldots,r$, are the components of $E$ at $a_0=0$, $S = \{z=x=0\}$, and 
\begin{equation}\label{eq:weierpoly}
f(w,u,x,z) = z^k + a_2(w,u,x)z^{k-2} +\cdots + a_k(w,u,x),
\end{equation}
where the coefficients $a_i(w,u,x)$ are regular, and
$f$ is in the ideal generated by $x_1,\ldots,x_{k-1}, z$.

\begin{theorem}\label{thm:split}(See \cite[Thm.\,1.1]{BB}.)
Let $f(w,u,x,z)$ denote a function \eqref{eq:weierpoly}, where the coefficients $a_i(w,u,x)$ are regular,
$f$ is in the ideal generated by $x_1,\ldots,\allowbreak x_{k-1}, z$,
and $f$ splits formally (into $k$ factors of degree $1$ in $z$) at every point where $z=x=0,\, w_1\cdots w_r \neq 0$.
Assume that $\IK$ is uncountable.
Then, after finitely many blowings-up with successive centres of the form 
\begin{equation}\label{eq:splitcentre}
\{z=x=w_j =0\},\quad 1 \leq j \leq r,
\end{equation}
we can assume that $f$ splits over $\IK\llb w^{1/p},u,  x\rrb$, for some positive integer $p$,
where $w^{1/p} := (w_1^{1/p},\ldots, w_r^{1/p})$;
i.e., that 
\begin{equation}\label{eq:split}
f(w,u,x,z) = \prod_{i=1}^k \left(z + b_i (w, u, x)\right),
\end{equation}
where each $b_i (w, u, x) \in \IK\llb w^{1/p},u,  x\rrb$.
\end{theorem}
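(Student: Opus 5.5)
We plan to argue via the discriminant and an Abhyankar--Jung type theorem, reducing to a valuative problem controlled by the exceptional divisors. Let $\Delta(w,u,x)$ be the discriminant of $f$ as a polynomial in $z$. If $\Delta \equiv 0$, we first pass to the reduced polynomial (the square-free part) and recover the multiplicities at the end. The key observation is that a monic polynomial over $\IK\llb w,u,x\rrb$ whose discriminant is a monomial in $w$ times a unit is quasi-ordinary, and therefore splits over $\IK\llb w^{1/p},u,x\rrb$ by the Abhyankar--Jung theorem. However, the hypothesis gives us much less. It only says that $f$ splits formally at points of $S\setminus E$. It does not say that $\Delta$ is monomial.

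We use the splitting hypothesis together with the fact that $f \in (x,z)$ and $\mathrm{ord}_S f = k$. At a point $b\in S$, $b\notin E$, write $f=\prod(z+b_i)$ with $b_i \in \IK\llb u-u(b), x\rrb$. Since $f$ has order $k$ along $S$, each $b_i$ vanishes along $S$, so $b_i(u,0)=0$. Hence each $b_i$ is linear-plus-higher in $x$ with coefficients depending on $u$. The roots are therefore "$x$-divisible". The plan is to consider the formal roots generically along $S$, over the algebraic closure of the fraction field of $\IK\llb u\rrb$ near generic points. We then show that the coefficients of the expansion $b_i=\sum_{\al} c_{i\al}(w,u)x^\al$ are algebraic over $\IK\llb w,u\rrb$ and are integral except for possible poles and branching along $w_1\cdots w_r=0$. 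The splitting at points of $S\setminus E$ ensures there is no ramification in $u$ away from $E$. By a purity-of-branch-locus argument (Zariski--Nagata), the field generated by all the $c_{i\al}$ is then unramified outside $E$. Since $E$ is snc, its Galois extensions are tame and abelian Kummer, so they are contained in $\IK\llbr w^{1/p}\rrbr\llb u\rrb$ for a suitable $p$.

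The main obstacle is uniformity. There are infinitely many coefficients $c_{i\al}$, and we need a single bound $p$ and a single bound on pole orders in $w$. This is where uncountability of $\IK$ should enter. We expect to use a Baire-category or countable-union argument: the set of points of $S\setminus E$ where splitting occurs with bounded data is a countable union of constructible sets, so some member must contain a dense open set. To control poles, we blow up centres $\{z=x=w_j=0\}$. In the chart $x=w_jx'$, $z=w_jz'$, the function $f$ becomes $w_j^k f'$, with $f'$ of the same Weierstrass form, and a root coefficient $c_{i\al}$ transforms into $w_j^{|\al|-1}c_{i\al}$. Each blowing-up therefore raises the $w_j$-order of the degree-$\al$ coefficients by $|\al|-1$. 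We want to show that finitely many such blowings-up make all $c_{i\al}$ holomorphic in $w^{1/p}$. This requires a linear bound on the pole order of $c_{i\al}$ in terms of $|\al|$. We would derive that bound from the Newton polygon of $f$ with respect to $x$, taking a fixed finite presentation of $f$ and applying the implicit function theorem degree by degree.

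Finally, once the roots lie in $\IK\llb w^{1/p},u,x\rrb$, the factorization \eqref{eq:split} follows. For the non-reduced case, we reinsert multiplicities to obtain the same conclusion.
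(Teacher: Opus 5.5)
The paper does not reproduce a proof of this theorem. It cites \cite{BB} and says only that the argument there uses uncountability of $\IK$. So I am judging your plan on its own terms.

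\textbf{What is right.} The architecture is reasonable, and your chart computation is correct. In the $w_j$-chart one has $b_i'(w,u,x')=w_j^{-1}b_i(w,u,w_jx')$, so $c_{i\al}\mapsto w_j^{|\al|-1}c_{i\al}$. A bound on the pole order along $w_j=0$ of the form $q(|\al|-1)$ is exactly what makes finitely many blowings-up suffice.

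\textbf{First gap: the generic roots and uniform $p$.} You need all the $c_{i\al}$ to lie in a single finite extension $F'$ of the function field $\IK(w,u)$ of $S$. You also need the normalization of $S$ in $F'$ to be \'etale over $S\setminus E$, with the $c_{i\al}$ integral there. When the linear parts of the roots are distinct, this follows from Hensel's lemma ($F'$ is the splitting field of the initial form) plus comparison with the pointwise roots. In general it must be derived from the pointwise hypothesis, and you give no argument ("we then show"). Once you have it, a single $p$ comes for free from Abhyankar's lemma applied to $F'$ at $a_0$. Purity is not the relevant tool off $E$, since the hypothesis already holds at every point of $S\setminus E$. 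Your countable-union argument over points of $S\setminus E$ cannot give any bound at $E$. At such points the roots are already holomorphic and unramified, so no "bounded data'' there says anything about $p$ or about poles along $w_j=0$.

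\textbf{Second gap: the linear pole bound.} This is the heart of the theorem, and it is only announced ("Newton polygon \ldots\ implicit function theorem degree by degree''). Here is what is needed, at least when the linear parts $\ell_i=\sum_j c_{ij}x_j$ are distinct. That is not assumed in the theorem as stated; coinciding linear parts need separate treatment. Write $b_i=\ell_i+\sum_{d\ge2}b_i^{(d)}$ with $b_i^{(d)}$ homogeneous of degree $d$ in $x$. The degree-$(d+k-1)$ part of $f(w,u,x,-b_i)=0$ reads
\begin{equation*}
D_i(x)\,b_i^{(d)}=R_d,\qquad D_i:=\prod_{m\neq i}(\ell_m-\ell_i).
\end{equation*}
Each term of $R_d$ is either a product of at least two $b_i^{(e)}$ with $2\le e<d$, or comes from the part of $f$ of $(x,z)$-degree $>k$. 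Solving this coefficient by coefficient (e.g.\ by Cramer's rule) can lose a $w_j$-order that grows with $d$.

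The missing idea is Gauss's lemma for the $w_j$-adic valuation $v_j$, extended to polynomials in $x$ by taking the minimum over coefficients. It says that exact division by the fixed polynomial $D_i$ costs exactly $\delta_i:=v_j(D_i)$, independent of $d$. Set $\delta:=\max_i\delta_i$, and note that the $\ell_i$ and the coefficients of $f$ have $v_j\ge0$. An induction then gives
\begin{equation*}
v_j\bigl(b_i^{(d)}\bigr)\ge-2\delta(d-1)+\delta .
\end{equation*}
Hence $\lceil 2\delta\rceil$ blowings-up with centre $\{z=x=w_j=0\}$, for each $j$, suffice.

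Without such an estimate, and without the passage from pointwise to generic splitting, the proposal does not yet prove the theorem.
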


\begin{remark}\label{rem:split}
Let $S'$ denote the strict transform of $S := \{z=x=0\}$ by a blowing-up $\s$ with centre of the form \eqref{eq:splitcentre}.
Then $S' \cong S$; moreover, $S' = \{z=x=0\}$ again, in the $w_j$-coordinate chart of $\s$, and $a_0=0$ lifts to $a_0'=0$ in $S'$.
The statement in the theorem means that, after finitely many blowings-up with centres of the form \eqref{eq:splitcentre},
the strict transform $X'$ of $X$ is given at $a_0'=0$ by a function $f'$ with a splittting as in \eqref{eq:split}.
\end{remark}

In general, consider a regular function $f$ on $Z$, which has order $k$ on a smooth subvariety $S$. 

\begin{remark}\label{rem:codim2}
The \emph{nonsplitting locus}
$Y\subset S$ (the subset of $S$ of points where $f$ does not split formally into $k$ factors of order $1$) is a closed algebraic
subset of $S$ \cite[Lemma 2.2]{BB}. Moreover, if $Y$ has codimension $\geq 2$ in $S$, then
$Y=\emptyset$ (see the proof of \cite[Lemma 2.3]{BB}; this assertion is a simple consequence of 
\emph{purity of the branch locus} \cite{Nag1}, \cite{Nag2}, or Hartog's theorem in the 
analytic case.)
\end{remark}

Let $a_0 \in S$. Assume that $f$ can be written as in \eqref{eq:weierpoly} at $a_0$, with respect to coordinates
\eqref{eq:coords} such that $r=1$ and $S = \{z=x=0\}$ (so that $f$ is in the ideal generated by $x_1,\ldots,x_{k-1}, z$),
and that $f$ splits formally on $S\backslash\{w=0\}$. (For example, $a_0$ might be
a point of a \emph{codimension one stratum} $\{z=x=w_i=0;\, w_j\neq 0, j\neq i\}$ in the setting of the splitting theorem.)
We can apply the splitting theorem locally at $a_0$, with $r=1$. The conclusion of the theorem means that
(before blowing up) there are $p, q \in \IN$,
$p \geq 1$, such that $f$ splits formally at $a_0$ as in \eqref{eq:split}, where each 
\begin{equation}\label{eq:roots}
b_i (w, u, x) \in \IK\left\llbracket w^{1/p},u,  \frac{x}{w^q}\right\rrbracket.
\end{equation}

\medskip
Now suppose, in addition, that $f$ is generically $\nc(k)$ on $S$, and the desingularization invariant $\ATWinv$ of \cite{ATW19} 
(for the hypersurface defined by $f$) is
constant on $S$ (equivalently, the year zero desingularization invariant $\inv$ of \cite{BMinv}, \cite{BMfunct} is constant on $S$).

We will show it follows, then, that $f$ splits formally at $a_0$ with $p=1$ and $q=0$ in \eqref{eq:roots}, and 
$f$ is $\nc(k)$ at $a_0$.

Note that, if the nonsplitting locus $Y$ of $f$ were smooth and of codimension $1$ in $S$ at a point $a_0$, then we could choose
coordinates as above with $Y = \{z=x=w=0\}$, and the splitting with $p=1,\, q=0$ would contradict $a_0 \in Y$.
Therefore, $Y = \emptyset$ and it follows that $f$ is $\nc(k)$ on $S$.

Splitting with $p=1$ will be proved in Section \ref{sec:pardes}. We will show this is enough to prove the Partial Desingularization 
Theorem \ref{thm:pardes}.

Splitting with furthermore $q=0$ will be proved in Section \ref{sec:clopen}. (Reduction to $p=1$ and to $q=0$ can, in fact, be done in
either order.) As a corollary, we get the following
(cf. \cite[Lemma 4]{AT}, \cite[Thm.\,1.1.1]{WlodarNC}).

\begin{corollary}\label{cor:clopen}
Let $X\subset Z$ denote an embedded hypersurface and let $S$ denote the locally closed smooth subvariety
\begin{equation*}
S := \{\ATWinv = \ATWinv(\nc(k))\}
\end{equation*}
(i.e., $S$ is the subset of $X$ of points $a$ where $\ATWinv(a)$ equals its value at an $\nc(k)$ point).
Then the $\nc(k)$ locus of $X$ is open and closed in $S$.
\end{corollary}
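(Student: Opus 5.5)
Openness is the easy half. The $\nc(k)$ locus is open in $X$, since a normal crossings singularity of order $k$ persists under small deformation of the base point (nearby points are $\nc(j)$ with $j\le k$). The invariant $\ATWinv$ is upper semicontinuous and $\ATWinv(\nc(j))<\ATWinv(\nc(k))$ for $j<k$. Hence, near an $\nc(k)$ point, the points of $S$ are exactly the $\nc(k)$ points. So the real content is closedness in $S$.

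For closedness I will reduce to the splitting statement announced above: if $f$ has order $k$ on a smooth $S$, is generically $\nc(k)$ on $S$, and $\ATWinv$ is constant on $S$, then $f$ is $\nc(k)$ on $S$. Let $S_0$ be a connected component of $S$ meeting the $\nc(k)$ locus; it suffices to show $S_0$ lies entirely in that locus. The argument runs in four steps.

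\emph{Step 1: order $k$ and irreducibility.} On $S_0$ the function $f$ has order $k$, because the first entry of $\ATWinv$ is the order. Since $S_0$ is smooth and connected, it is irreducible. The $\nc(k)$ locus is open and nonempty in $S_0$, so it is dense, and therefore $f$ is generically $\nc(k)$ on $S_0$.

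\emph{Step 2: the nonsplitting locus.} Let $Y\subset S_0$ be the nonsplitting locus of Remark \ref{rem:codim2}. It is closed, and it is a proper subset because $\nc(k)$ points split. If $Y\neq\emptyset$, Remark \ref{rem:codim2} says it has a component of codimension one. At a general point $a_0$ of such a component, $Y$ is smooth. There I choose coordinates \eqref{eq:coords} with $r=1$, $S_0=\{z=x=0\}$ and $Y=\{z=x=w=0\}$, in which $f$ has the Weierstrass form \eqref{eq:weierpoly}. The Tschirnhausen normalisation (no $z^{k-1}$ term) is compatible with this because the maximal contact hypersurface is $\{z=0\}$. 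In these coordinates $f$ splits formally off $\{w=0\}$.

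\emph{Step 3: apply the splitting theorem.} Theorem \ref{thm:split} with $r=1$ yields roots $b_i$ as in \eqref{eq:roots}. I then invoke the result proved in Sections \ref{sec:pardes} and \ref{sec:clopen}: constancy of $\ATWinv$ on $S_0$, together with generic $\nc(k)$, forces $p=1$ and $q=0$. Then $f$ splits at $a_0$ into factors $z+b_i(w,u,x)$ with $b_i\in\IK\llb w,u,x\rrb$, which contradicts $a_0\in Y$. Hence $Y=\emptyset$ and $f$ splits into linear factors at every point of $S_0$.

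\emph{Step 4: split implies $\nc(k)$.} It remains to show that at each $a\in S_0$ the factors $z+b_i$ have linearly independent differentials, so that $f$ is $\nc(k)$. For this I would again use constancy of $\ATWinv$. The linear parts of the $b_i$ in $x$ determine the leading form of $f$ along the invariant's next entries, namely the orders of the coefficient ideal. Degeneration of the linear span of the factors' differentials would raise those entries compared with the generic $\nc(k)$ value. Alternatively, one can note that the rank of the linear parts of the $b_i$ in the $x$ variables is lower semicontinuous, and that $\ATWinv$ detects its drop. Step 4 is the step I expect to be the main obstacle, along with the reduction $p=1$, $q=0$, which is where constancy of the invariant is genuinely used. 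Once these hold, $S_0$ lies entirely in the $\nc(k)$ locus, which is therefore closed in $S$.
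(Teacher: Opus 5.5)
Your proposal is correct and is essentially the paper's argument: the paragraph following Remark~\ref{rem:codim2}, with Lemmas~\ref{lem:p1} and \ref{lem:q0} supplying $p=1$, $q=0$ at a general point of a codimension-one component of the nonsplitting locus, and your Step 4 is exactly Lemma~\ref{lem:homog}(2), which can be made precise as follows: the leading form at $a$ is a product of $k$ linear forms $z+\ell_i(x)$, and if these spanned fewer than $k$ dimensions the form would involve fewer than $k$ variables, so its invariant would have fewer than $k$ entries and could not equal $\ATWinv(\nc(k))=(k,\ldots,k)$. One small slip: it is the locus of nc points of order $\le k$ that is open in $X$ (the $\nc(k)$ locus itself is only open in $S$), but your parenthetical shows this is what you actually use.
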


We will also show that Theorem \ref{thm:pardes} follows directly from this corollary of the splitting theorem.

\section{The partial desgingularization theorem}\label{sec:pardes}
Following Section \ref{sec:split}, we consider a regular function $f$ on $Z$, and $S := \{\ATWinv = \ATWinv(\nc(k))\}$,
where $\ATWinv$ is the desingularization invariant of \cite{ATW19} for the hypersurface defined by $f$. In particular,
$f$ has order $k$ on $S$.

\begin{remark}\label{rem:homog} Suppose $S = \{z=x=0\}$, with respect to coordinates as in \eqref{eq:coords}.
Then, at any point $a\in S$, since $\ATWinv(a) = \ATWinv(\nc(k)) = (k,\ldots,k)$ ($k$ times), the lowest order homogeneous part of (the
formal expansion at $a$ of) $f$ is a homogeneous polynomial of degree $k$ in $(x,z)$, and $\ATWinv$ of this
homogeneous polynomial is also $\ATWinv(\nc(k))$.
\end{remark}

\begin{remark}\label{rem:assn} \emph{Assumptions and notation.}
Lemmas \ref{lem:negpower}--\ref{lem:p1} and \ref{lem:q0} below all use the following assumptions and notation.
Let $a_0 \in S$ and assume that $f$ is written as in \eqref{eq:weierpoly} at $a_0$, with respect to coordinates
\eqref{eq:coords} such that $r=1$ and $S = \{z=x=0\}$, and that $f$ is $\nc(k)$ on $S\backslash\{w=0\}$. As in Section
\ref{sec:split}, therefore,
there are $p, q \in \IN$, $p\geq 1$, such that $f$ splits formally at $a_0$ as in \eqref{eq:split}, with each 
$b_i (w, u, x) \in \IK\llb w^{1/p},u,  x/w^q\rrb$.

Let $\IL$ denote an algebraic closure $\overline{\IK(u)}$ of the field of fractions $\IK(u)$ of $\IK[u]$. Over $\IL$,
$\ATWinv(a_0)$ is also $\ATWinv(\nc(k))$. For each $i=1,\ldots,k$, we can write
\begin{equation}\label{eq:rootexp}
b_i(w,u,x) = \sum_{j=1}^{k-1} b_{ij}(w,u)x_j + O(x^2),
\end{equation}
where $O(x^2)$ means an element of the ideal $(x)^2$, and each $b_{ij}(w,u)$ is a formal Laurent series in
$w^{1/p}$ with coefficients in $\IL \cap \IK\llb u\rrb \subset \overline{\IK\llbr u\rrbr}$. Moreover, by the generic
$\nc(k)$ assumption, since $\sum b_i = 0$, for any given $i = i_0$, the matrix $\left(b_{ij}(w,u)\right)_{i\neq i_0}$
is invertible as a matrix with entries in $\overline{\IK(w^{1/p},u)}$.

It follows that, for each $i=1,\ldots,k$, we can write
\begin{equation*}
b_{ij}(w,u) = w^{d_i/p} \tb_{ij}(w,u),\quad j=1,\ldots,k-1,
\end{equation*}
where $d_i \in \IZ$, each $\tb_{ij}(w,u) \in \IK\llb w^{1/p},u\rrb$, and $\tb_{ij}(0,u)$ is not the zero power series, for some $j=j_i$.
\end{remark}

\begin{lemma}\label{lem:negpower}
$d_i \geq 0$, $i=1,\ldots,k$.
\end{lemma}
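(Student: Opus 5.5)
The idea is to see $d_i/p$ as the $w$-adic valuation of the linear part of the root $-b_i$, and then use the fact that a root of a monic polynomial whose coefficients have no negative powers of $w$ cannot have negative valuation. The first step is to identify the lowest-order part of $f$ in $(x,z)$ with the product of the linear parts of the factors in \eqref{eq:split}.

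Since $f$ has order $k$ on $S=\{z=x=0\}$ and is monic of degree $k$ in $z$, setting $x=0$ in \eqref{eq:weierpoly} gives $f(w,u,0,z)=z^k$. Hence $a_m(w,u,0)=0$ for all $m$, and by the order hypothesis each $a_m\in (x)^m$. Comparing with \eqref{eq:split} at $x=0$ gives $\prod_i (z+b_i(w,u,0))=z^k$, so $b_i(w,u,0)=0$ for every $i$.

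Put $\ell_i := \sum_j b_{ij}(w,u)x_j$. Taking the part of \eqref{eq:split} that is homogeneous of degree $k$ in $(x,z)$, I get
\begin{equation*}
P(z) := z^k + a_2^{(2)}z^{k-2}+\cdots + a_k^{(k)} = \prod_{i=1}^k (z+\ell_i),
\end{equation*}
where $a_m^{(m)}$ denotes the part of $a_m$ homogeneous of degree $m$ in $x$. Each $a_m^{(m)}$ is a polynomial in $x$ with coefficients in $\IK\llb w,u\rrb$, since the $a_m$ are regular. In particular, no negative powers of $w$ appear in these coefficients.

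Next, on the polynomial ring $R:=\IL\llbr w^{1/p}\rrbr[x]$, consider the Gauss valuation
\begin{equation*}
v\Bigl(\sum_\al c_\al x^\al\Bigr) := \min_\al \ord_w c_\al .
\end{equation*}
By Gauss's lemma this is a valuation on $R$, since $\IL\llbr w^{1/p}\rrbr$ is a valued field with valuation $\ord_w$. It extends to the fraction field of $R$, where $v(z)$ is irrelevant because we evaluate $P$ at the elements $-\ell_i\in R$. By construction $v(a_m^{(m)})\geq 0$ for all $m$. Moreover, by the definition of $d_i$ (with $\tb_{ij}(0,u)\neq 0$ for some $j$), we have $v(\ell_i)=d_i/p$. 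The series $b_{ij}$ have coefficients in $\IL\cap\IK\llb u\rrb$, so working over $\IL$ loses nothing.

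Now suppose $d_i<0$ for some $i$. Since $P(-\ell_i)=0$,
\begin{equation*}
(-\ell_i)^k = -\sum_{m\geq 2} a_m^{(m)}(-\ell_i)^{k-m}.
\end{equation*}
The left side has valuation $k\,d_i/p$. Each term on the right has valuation at least $(k-m)d_i/p > k\,d_i/p$, because $m\geq 2$ and $d_i<0$. This is a contradiction, so $d_i\geq 0$.

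The step needing the most care is the first: checking that every $b_i$ lies in the ideal generated by $x$, so that the degree-$k$ part in $(x,z)$ of \eqref{eq:split} is exactly $\prod_i(z+\ell_i)$, given that the $b_i$ a priori lie in $\IK\llb w^{1/p},u,x/w^q\rrb$. Terms of $b_i$ of degree $\geq 2$ in $x$, even with negative powers of $w$, contribute only to degree $>k$ in $(x,z)$. So grading by total degree in $(x,z)$, with $w$-Laurent coefficients, is legitimate and the comparison goes through.
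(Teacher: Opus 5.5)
Your proof is correct. Its engine is the same as the paper's: the $a_j$ are the elementary symmetric functions of the roots, and a root of a monic polynomial whose $j$-th coefficient has order $\geq j$ cannot have order $<1$. What differs is where, and with which valuation, you run this argument.

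The paper applies it directly to the full roots $b_i$. It uses the order with respect to $(w,x)$ of the $b_i$, viewed as expansions in $x$ with Laurent coefficients in $w^{1/p}$, gets $\ord b_i\geq 1$, and reads off $d_i\geq 0$ from the linear terms. You first take the part of \eqref{eq:split} homogeneous of degree $k$ in $(x,z)$, obtaining $P(z)=\prod_i(z+\ell_i)$. You then use the Gauss $w$-adic valuation on the polynomial ring $\IL\llbr w^{1/p}\rrbr[x]$, for which $v(\ell_i)=d_i/p$ exactly.

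The paper's route is two lines and bounds all of $b_i$, not just its linear part. Yours costs a preliminary step, but it only involves polynomials in $x$ over a valued field, so multiplicativity is just Gauss's lemma. You also never need to make sense of the $(w,x)$-order of the full roots, which takes some care when $q\geq 2$: the monomials $w^{\be/p}(x/w^q)^{\al}$ have $(w,x)$-order $\be/p+(1-q)|\al|$, which is unbounded below. In addition, your identity $P(z)=\prod_i(z+\ell_i)$ is exactly what is used right after the lemma, where setting $w=u=0$ identifies the lowest order homogeneous part of $f$.

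One small point: to view the coefficients of $a_m^{(m)}$ in $\IL\llbr w^{1/p}\rrbr$, you need their $u$-coefficients to be algebraic over $\IK(u)$. This holds because the $a_m$ are regular. You can also bypass it by working over the fraction field of $\IK\llb u\rrb$ instead of $\IL$, since the Gauss-valuation argument does not depend on the coefficient field.
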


\begin{proof}
Consider $f$ and the $b_i$ over the field $\IL$. Each coefficient $a_j$ of $f$ is the elementary symmetric polynomial
of degree $j$ in the $b_i$. Therefore, $\ord\, a_j \geq j$, for all $j$, if and only if $\ord\, b_i \geq 1$, for all $i$,
where $\ord\, b_i$ is understood as the order with respect to $(w,x)$ of $b_i$ as a formal expansion in $x$ with coefficients
that are Laurent series in $w^{1/p}$.
The result follows.
\end{proof}

It follows from Lemma \ref{lem:negpower} that the lowest order homogeneous part of $f$ is
\begin{equation*}
\prod_{i=1}^k \bigg(z + \sum_{j=1}^{k-1} b_{ij}(0,0) x_j\bigg).
\end{equation*}
Since $\ATWinv(a_0) = \ATWinv(\nc(k)) = \ATWinv(\text{lowest order homogeoneous part})$ and $\sum b_i = 0$, we get the following lemma.

\begin{lemma}\label{lem:homog}
\begin{enumerate}
\item The matrix $\left(b_{ij}(0,0)\right)_{i\neq i_0}$ is invertible, for any choice of $i_0$.

\smallskip
\item The lowest order homogeneous part of $f$ is $\nc(k)$.

\smallskip
\item $d_i = 0,\, i=1,\ldots,k$; i.e., $b_{ij} = \tb_{ij}$, for all $i,j$.
\end{enumerate}
\end{lemma}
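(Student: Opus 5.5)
The plan is to work with the lowest order homogeneous part
\[
f_k(x,z) := \prod_{i=1}^k \bigg(z + \sum_{j=1}^{k-1} \beta_{ij} x_j\bigg),\qquad \beta_{ij} := b_{ij}(0,0),
\]
where $b_{ij}(0,0)$ denotes $\tb_{ij}(0,0)$ when $d_i=0$, and the corresponding linear form vanishes when $d_i>0$ (by Lemma \ref{lem:negpower}, $d_i\ge 0$). I will first establish (1) and (2) together, and then deduce (3).

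\textbf{Step 1: (2), and (1) for one index.} By Remark \ref{rem:homog}, $\ATWinv(f_k) = \ATWinv(\nc(k)) = (k,\ldots,k)$. I will use the fact that the first $k$ entries of $\ATWinv$ for a homogeneous form of degree $k$ record the number of variables the form genuinely depends on, i.e.\ the dimension of the span of its linear "directrix". Concretely, if $f_k$ depended on fewer than $k$ linear forms in $(x,z)$, the maximal contact/coefficient ideal iteration would terminate with fewer than $k$ entries equal to $k$. Since $f_k$ is a product of $k$ linear forms $\ell_i = z + \sum_j \beta_{ij}x_j$ in the $k$ variables $(x,z)$, it therefore involves all $k$ variables essentially.

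I then argue that the $\ell_i$ are linearly independent. If they spanned a subspace of dimension $<k$, then after a linear change of variables $f_k$ would depend on fewer than $k$ variables, contradicting the invariant computation. Linear independence of $\ell_1,\dots,\ell_k$ makes $f_k$ a product of $k$ independent linear forms, which is $\nc(k)$; this gives (2). For (1), the relation $\sum_i b_i = 0$ gives $\sum_i \beta_{ij} = 0$ for each $j$. Hence, modulo the row of $z$-coefficients, the $k\times k$ matrix of $(\ell_i)$ has rank $k$ exactly when any $k-1$ of the rows $(\beta_{ij})_j$ are independent. Subtracting rows, linear independence of the $\ell_i$ is equivalent to invertibility of $(\beta_{ij})_{i\ne i_0}$ for one, and hence for every, $i_0$.

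\textbf{Step 2: (3).} Suppose some $d_{i_1}>0$. Then the row $(b_{i_1 j}(0,0))_j$ is zero. Choose $i_0\ne i_1$; the matrix $(b_{ij}(0,0))_{i\ne i_0}$ contains a zero row, contradicting (1). Hence all $d_i = 0$, i.e.\ $b_{ij}=\tb_{ij}$.

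\textbf{Main obstacle.} The main difficulty is the claim in Step 1 that $\ATWinv = (k,\ldots,k)$ for a product of $k$ linear forms forces the forms to be independent. I would first reduce this to a standard fact: for a homogeneous form $g$ of degree $k$, the number of leading entries equal to $k$ in $\ATWinv(g)$ equals the minimal number of variables in which $g$ can be written. This is proved by induction on maximal contact. At each step the hypersurface $\{\partial^{k-1}g/\partial(\cdot)=0\}$ is a linear hyperplane, and the coefficient ideal retains order $k$ exactly along the remaining essential variables.

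If the forms are dependent but span all $k$ variables, some repeated or collinear pattern occurs. Here I would compare directly with $\nc(k)$, checking that $\ATWinv$ then drops at the $(k)$th entry because some coefficient of the restricted form has order $<k$ in the residual variables. Alternatively, I would argue via genericity: $\ATWinv$ equals $\ATWinv(\nc(k))$ on a dense set, together with upper semicontinuity and a computation in the explicit example $z^{k-2}(z^2-x_1^2)$-type degenerations.
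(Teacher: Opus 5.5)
Your proposal is correct and follows the paper's approach. The paper only observes that the lowest order homogeneous part $\prod_i\bigl(z+\sum_j b_{ij}(0,0)x_j\bigr)$ has $\ATWinv=\ATWinv(\nc(k))$ and that $\sum_i b_i=0$. You supply the missing steps: $\ATWinv=(k,\ldots,k)$ forces $k$ essential variables, hence independent $\ell_i$; the coefficient matrix of the $\ell_i$ has determinant $\pm k\det(\beta_{ij})_{i\neq i_0}$ because the columns of $(\beta_{ij})$ sum to zero; and a zero row when some $d_i>0$ gives (3).

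One correction to your ``main obstacle'' paragraph: the case of forms that are dependent yet span all $k$ variables cannot occur, since $k$ forms spanning a $k$-dimensional space are automatically independent. So the genericity/semicontinuity alternative is unnecessary.
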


\begin{lemma}\label{lem:p1}
We can take $p=1$; i.e.,
each $b_i (w, u, x) \in \IK\llbracket w, u, x/w^q\rrbracket$.
\end{lemma}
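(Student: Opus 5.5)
The plan is to use the Galois symmetry $w^{1/p}\mapsto \zeta w^{1/p}$ together with Lemma \ref{lem:homog}. Let $\zeta$ be a primitive $p$th root of unity in $\IK$. Let $\tau$ be the $\IK\llb w,u\rrb$-algebra automorphism of $\IK\llb w^{1/p},u,x/w^q\rrb$ defined by $w^{1/p}\mapsto \zeta w^{1/p}$, acting coefficientwise on formal expansions and fixing $u$ and $x$. Since $\tau(x/w^q)=x/w^q$, this is well-defined. The element $\tau$ generates the cyclic group of such automorphisms, and the series fixed by $\tau$ are exactly those involving only integral powers of $w$. So it suffices to prove that $\tau(b_i)=b_i$ for every $i$.

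The coefficients $a_j$ of $f$ lie in $\IK\llb w,u,x\rrb$, so $\tau$ fixes them. Applying $\tau$ to the factorization \eqref{eq:split} gives $f=\prod_i\bigl(z+\tau(b_i)\bigr)$. The ring $\IK\llb w^{1/p},u,x/w^q\rrb[z]$ is a polynomial ring over a domain, and factorization of a monic polynomial into linear factors over a domain is unique up to order. Hence there is a permutation $\pi$ of $\{1,\ldots,k\}$ with $\tau(b_i)=b_{\pi(i)}$. Comparing the coefficients of $x_j$ in \eqref{eq:rootexp} then gives $\tau(b_{ij})=b_{\pi(i)j}$ for all $j$.

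By Lemma \ref{lem:homog}(3), each $b_{ij}=\tb_{ij}$ lies in $\IK\llb w^{1/p},u\rrb$ (with coefficients in $\IL$ as in Remark \ref{rem:assn}). The constant term $b_{ij}(0,0)$ is unchanged by $\tau$, since $\tau$ only multiplies the monomials $w^{m/p}$ by $\zeta^m$. Therefore the rows $\bigl(b_{ij}(0,0)\bigr)_j$ and $\bigl(b_{\pi(i)j}(0,0)\bigr)_j$ coincide.

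The key point, and the only step needing care, is that the $k$ rows $R_i:=\bigl(b_{ij}(0,0)\bigr)_{j}\in\IK^{k-1}$ are pairwise distinct. This follows from Lemma \ref{lem:homog}(1) together with $\sum_i R_i=0$.
\begin{itemize}
\item If $k\geq 3$ and $R_{i_1}=R_{i_2}$ with $i_1\neq i_2$, choose $i_0\notin\{i_1,i_2\}$. Then the matrix $(R_i)_{i\neq i_0}$ has two equal rows, contradicting its invertibility.
\item If $k=2$, then $R_2=-R_1$, and $R_1\neq 0$ by invertibility, so $R_1\neq R_2$ in characteristic zero.
\item The case $k=1$ is trivial.
\end{itemize}
Hence $\pi(i)=i$ for all $i$, so $\tau(b_i)=b_i$. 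Each $b_i$ therefore involves only integral powers of $w$, i.e. $b_i\in\IK\llb w,u,x/w^q\rrb$, which is the assertion $p=1$.
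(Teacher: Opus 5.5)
Your proof is correct and follows essentially the same route as the paper. In both, the Galois action $w^{1/p}\mapsto \zeta w^{1/p}$ permutes the roots $b_i$ and leaves the values $b_{ij}(0,0)$ unchanged, so Lemma \ref{lem:homog}(1) forces the permutation to be trivial. The paper phrases this as a contradiction with the minimality of $p$, whereas you conclude $\tau$-invariance directly; you also spell out two details the paper leaves implicit, namely why a permutation $\pi$ exists and the $k=2$ case via $\sum_i b_i=0$.
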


\begin{proof}
Consider the smallest possible $p$, and assume that it is $>1$. There is an action of the multiplicative cyclic group $\mu_p$
on the set of roots $\{b_i\}$ induced by $\ep\cdot: w^{1/p} \to \ep w^{1/p}$, where $\ep = e^{2\pi i /p}$. Since $p>1$, the action of $\mu_p$
on the set of roots is nontrivial; i.e., there exists $i=i_0$ such that $\ep\cdot b_{i_0} = b_{i_1}$, where $i_1 \neq i_0$.
But then $b_{i_0,j}(0,u) = b_{i_1,j}(0,u)$, $j=1,\ldots,k-1$, in contradiction to Lemma \ref{lem:homog}(1).
\end{proof} 

\begin{proof}[Proof of the Partial Desingularization Theorem \ref{thm:pardes}]
Normal crossings singularities are hypersurface singularities. By ordinary resolution of singularities \cite{BMinv}, \cite{BMfunct},
there is a finite sequence of smooth blowings-up over the non-hypersurface points of $X$ (thus preserving nc points) after which
$X$ is a hypersurface.

We can therefore assume that $X\subset Z$ is an embedded hypersurface. We now argue by induction on $k$. The base
case $k=1$ is ordinary resolution of singularities. 

Given $k>1$, and following the algorithm of \cite{ATW19} for stack-theoretic desingularization by weighted blowings-up,
we can blow up until the maximum value of $\ATWinv$ is $\leq \ATWinv(\nc(k))$. Let
$S := \{\ATWinv=\ATWinv(\nc(k))\}$. Assume $S\neq \emptyset$. Then $S$ is the maximum locus of $\ATWinv$, so $S$ is 
a smooth closed subset of $X$. (We use the same notation $X, S$, etc., for the strict transforms of these objects by
the blowings-up involved.) Furthermore, we can blow up any component of $S$ on which $X$ is not generically $\nc(k)$,
so we can assume that $X$ is generically $\nc(k)$ on (every component of) $S$.

Let $Y\subset S$ denote the nonsplitting locus of $X$ in $S$ (i.e., of a local generator of the ideal of $X$ at any point of $S$).
By ordinary resolution of singularities, there is a sequence of smooth blowings-up with centres over $Y$,
after which $X$ is $\nc(k)$ on $S\backslash E$, where $E\subset Z$ is the exceptional divisor (transverse to $S$).

By the Splitting Theorem \ref{thm:split}, there is a positive integer $p$ and a finite sequence of blowings-up with 
centres $S\cap E_j$, where the $E_j$ are components of $E$, after which, at any point $a_0$ of $S$, we can choose
coordinates \eqref{eq:coords} in an \'etale neighbourhood of $a_0$, so that $\{w_j=0\}$, $j=1,\ldots,r$, are the components 
of $E$ at $a_0=0$, $S = \{z=x=0\}$, and the ideal of $X$ is generated by a function $f$ as in \eqref{eq:weierpoly},
which splits according to \eqref{eq:split} with roots $b_i (w, u, x) \in \IK\llb w^{1/p},u,  x\rrb$.

Note that each of the preceding two blow-up sequences preserves the property that $S := \{\ATWinv=\ATWinv(\nc(k))\}$,
since the blowings-up involved preserve the lowest order homogeneous part of (the formal expansion of) of a local
generator of the ideal of $X$.

If $a_0$ is a point of a codimension one stratum of $S\cap E$ (i.e., a point of $S$ lying in precisely one component of $E$),
then $r=1$ and the splitting occurs with $p=1$, by Lemma \ref{lem:p1}, so that $X$ is $\nc(k)$ at $a_0$. By 
Remark \ref{rem:codim2}, it follows that, for all $a\in S$, $X$ splits at $a$, and $X$ is $\nc(k)$ at $a$ since
$\ATWinv(a) = \ATWinv(\nc(k))$.

We can now apply the inductive
hypothesis in the complement of $S$. The centres of weighted blowing-up will be isolated from $S$ and, therefore,
closed in $X$ because $X$ is already normal crossings of order $<k$ in a deleted neighbourhood of $S$.
\end{proof}

\begin{remark}\label{rem:p1} 
The proof above uses Lemma \ref{lem:p1} only in the case that $q=0$. In this case, Lemma \ref{lem:negpower}
is tautological, so that Lemma \ref{lem:p1} is even simpler to prove.
\end{remark}

\section{Normal crossings strata}\label{sec:clopen}
Let us return to the setting of Section \ref{sec:pardes}. In particular, we use the notation and
assumptions of Remark \ref{rem:assn}, and we are assuming that $S := \{\ATWinv = \ATWinv(\nc(k))\}$,
$r=1$, and $f$ splits at $a_0$ as in \eqref{eq:split},
with each $b_i (w, u, x) \in \IK\llbracket w^{1/p}, u, x/w^q\rrbracket$.

\begin{lemma}\label{lem:q0}
Splitting \eqref{eq:split} holds with $q=0$; i.e., each $b_i (w, u, x) \in \IK\llbracket w^{1/p}, u, x\rrbracket$.
\end{lemma}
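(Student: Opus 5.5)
We argue by contradiction, much as in Lemma \ref{lem:negpower}. Suppose some root $b_i$ genuinely involves negative powers of $w$ in its dependence on $x$. Expand each root as a power series in $x$,
\begin{equation*}
b_i(w,u,x) = \sum_{\al} b_{i\al}(w,u)\, x^\al ,
\end{equation*}
where each $b_{i\al}$ is a Laurent series in $w^{1/p}$ with coefficients in $\IL\cap\IK\llb u\rrb$, and $\ord_w b_{i\al} \geq -q|\al|$. Lemmas \ref{lem:negpower} and \ref{lem:homog} already give that the linear coefficients satisfy $b_{i\al}\in \IK\llb w^{1/p},u\rrb$ for $|\al|=1$. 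Moreover, the vectors $(b_{ij}(0,0))_j$ are the linear forms of an $\nc(k)$ homogeneous part: any $k-1$ of them are linearly independent, and they sum to zero.

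The plan is to use the linear parts to make a change of variables in which the roots become coordinates. Work over $\IL\llb w^{1/p}\rrb$, or over $\IK\llb w^{1/p},u\rrb$ after localizing at $u=0$. Since the matrix $(b_{ij}(w,u))_{i\le k-1}$ is invertible at $w=u=0$ by Lemma \ref{lem:homog}(1), the map $x\mapsto y_i := z + b_i(w,u,x)$, $i=1,\ldots,k-1$, can be analysed as follows. Set $y_i' := \sum_j b_{ij}(w,u)x_j$ for $i\le k-1$. These are honest formal coordinates replacing $x$, with no negative powers. In these coordinates, $b_i = y_i' + \text{(higher order terms in } y')$, where the higher order terms have coefficients in $\IK\llb w^{1/p},u\rrbr[w^{-1/p}]$ with pole order at most $q|\al|$.

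Next, compare with the coefficients $a_j$ of $f$. These are regular, so they lie in $\IK\llb w,u,x\rrb$, and they are the elementary symmetric functions of the $b_i$. Let $N$ be the most negative $w$-exponent occurring in any coefficient of any $b_i$ (with respect to $x$, or $y'$), and among the terms attaining it choose one of minimal $x$-degree $m\ge 2$. Take the leading part of the $b_i$ with respect to a weighting that gives $w$ weight $1$ and $x$ a large weight, so that the extreme terms $w^{N/p}x^\al$ dominate. I then want to show that some $a_j$ inherits a term $w^{N/p}\cdot(\text{nonzero polynomial in }x)$ from this leading part. The only available cancellation comes from symmetric combinations. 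The key fact to exploit is that the leading linear parts $y_1',\ldots,y_{k-1}'$ and $y_k'=-\sum y_i'$ are in general position, with nonzero coefficients modulo $w$. Consequently the power sums, or the discriminant-type expressions, are injective enough that a nonzero ``perturbation'' $(c_1,\ldots,c_k)$ with $\sum c_i=0$ of the roots cannot leave all the elementary symmetric functions unchanged at leading order. Concretely, the first-order variation of $e_j$ is $\sum_i c_i\, e_{j-1}(\text{roots}\neq i)$. The Vandermonde-type matrix $\bigl(e_{j-1}(y'_{\hat i})\bigr)_{i,j}$ is nonsingular, because the $y_i'$ are pairwise distinct linear forms modulo $w$. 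Hence some $e_j$ acquires a nonzero term with a negative power of $w$, contradicting the regularity of $a_j$.

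The main obstacle is making this leading-term argument rigorous. Several roots may have poles simultaneously, so the products in $e_j$ could mix pole terms from different roots and produce cancellation or even worse poles. My first attempt would be to use the Newton power sums $p_m=\sum_i b_i^m$, which are regular since they are polynomials in the $a_j$. I would run an induction on the $x$-degree. Assume all coefficients of $x$-degree $<m$ are pole-free. Then the degree-$m$ part of $p_{\ell}$, for $\ell=1,\ldots,k$, is linear in the degree-$m$ coefficients $c_i$ modulo pole-free terms, namely $\ell\sum_i (y_i')^{\ell-1} c_i$ plus regular terms. Invertibility of the Vandermonde matrix in the distinct linear forms $y_i'$ then forces each $c_i$ to be pole-free. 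The subtle point is that the Vandermonde determinant $\prod_{i<l}(y_i'-y_l')$ is a polynomial in $x$, not a unit, so the argument must divide in the ring of polynomials in $x$ over $\IK\llbr w^{1/p}\rrbr\llb u\rrb$. I expect to handle this using the fact that the $y_i'-y_l'$ are pairwise coprime linear forms whose coefficients are units modulo $w$. Then divisibility by them does not introduce powers of $w$, and this is where Lemma \ref{lem:homog}(1) enters essentially.
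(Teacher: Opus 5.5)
Your second argument (Newton power sums with induction on $x$-degree) is a correct proof, and it takes a genuinely different route from the paper. Drop the first heuristic attempt: a ``most negative exponent'' $N$ need not exist, since the pole order of the coefficient of $x^\al$ can be as large as $q|\al|$, which is unbounded.

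Two points in the second argument need to be stated precisely. First, the component of $p_\ell$ to examine has $x$-degree $m+\ell-1$, not $m$. By degree counting, the only part of it involving degree-$m$ components of the roots is $\ell\sum_i (y_i')^{\ell-1}c_i$, and everything else is pole-free by the inductive hypothesis. Second, the closing step works as follows. The ideal $(w^{1/p})$ is prime in $\IK\llb w^{1/p},u\rrb[x]$, with quotient $\IK\llb u\rrb[x]$. The adjugate identity puts $\Delta c_i$ in this ring, where $\Delta=\prod_{i<l}(y_i'-y_l')$. Finally, $\Delta\notin(w^{1/p})$ because each $y_i'-y_l'$ is nonzero modulo $w$ by Lemma~\ref{lem:homog}(1); for $k=2$, use $y_1'-y_2'=2y_1'$. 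Pairwise coprimality of the factors, or all coefficients being units, plays no role.

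The paper proceeds differently:
\begin{enumerate}
\item It reduces to $p=1$ via Lemma~\ref{lem:p1}, and to $q=1$ by blowing up.
\item It orders monomials $x^\al w^\be$ by $\lex(|\al|+\be,|\al|,\al)$.
\item Using $a_k=\prod_i b_i$, it shows that the least monomial of each root is linear.
\item It takes the least monomial $x^{\al_0}w^{\be_0}$ with $\be_0<0$ and, by a first-order expansion of $\s_{k-h+1}$, exhibits the explicit monomial $x^{\ga_h+\al_0}w^{\be_0}$ in the single coefficient $a_{k-h+1}$.
\end{enumerate}
This is in effect a rigorous version of your first attempt. It costs careful leading-term bookkeeping of the $Q_i$ and $R_i$ terms and a combinatorial choice of $h$ and $\ga_h$.

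What your version buys: it grades only by $x$-degree and uses $p_1,\ldots,p_k$ simultaneously, so poles from different roots never interact. It treats all roots symmetrically, and it works directly for arbitrary $p,q$ without Lemma~\ref{lem:p1} or the blow-up reduction. The paper's argument is more hands-on and pinpoints a specific forbidden monomial in a specific coefficient of $f$.
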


\begin{proof}
By Lemma \ref{lem:p1}, we can assume that $p=1$. Moreover, it is enough to prove Lemma \ref{lem:q0} in the
case $q=1$. Indeed, if we assume the lemma in this case, then splitting with each $b_i (w, u, x) \in \IK\llbracket w^{1/p}, u, x/w^q\rrbracket$,
for given $q>1$, implies that, after $q-1$ blowings-up with centre $\{z=x=w=0\}$, $f$ splits with $q=1$ and therefore with $q=0$,
by the assumption, so we also have $b_i (w, u, x) \in \IK\llbracket w^{1/p}, u, x/w^{q-1}\rrbracket$.

We can assume, therefore, that, \emph{a priori}, each $b_i (w, u, x) \in \IK\llbracket w, u, x/w\rrbracket$. (The preceding reduction
to this case is just
for the purpose of simplifying notation; the argument following can be carried out with general $p,q$.)

Recall the expansion \eqref{eq:rootexp}. By Lemma \ref{lem:homog}(1), we can make a change of coordinates to assume that
\begin{equation*}
b_i (w, u, x) = x_i + O(x^2),\quad i=1,\ldots,k-1.
\end{equation*}

Let us also write 
\begin{equation}\label{eq:rootexp1}
b_i (w, u, x) = \sum_{\al\in\IN^{k-1}} \sum_{\be \in \IZ} b_{i\al\be}(u) x^\al w^\be,\quad i=1,\ldots k,
\end{equation}
where each coefficient $b_{i\al\be}(u) \in \IK\llb u\rrb$. Let $\supp b_i := \{(\al,\be)\in \IN^{k-1}\times \IZ: b_{i\al\be}\neq 0\}$.

Order the elements $(\al,\be)=(\al_1,\ldots,\al_{k-1},\be) \in \IN^{k-1}\times \IZ$ using the lexicographic ordering
\begin{equation*}
\lex(|\al|+\be, |\al|, \al),
\end{equation*}
where $|\al| := \al_1 +\cdots + \al_{k-1}$.
Clearly, $(\al,\be) \geq \lex(0,0,0)$, for all $(\al,\be)\in \supp b_i$, $i=1,\ldots,k$, with respect to this ordering.

For each $i=1,\ldots,k$, let $(\al_i,\be_i)$ denote the smallest element of $\supp b_i$.

\medskip\noindent
\emph{Claim.} $(\al_i,\be_i) = (e_i,0),\, i=1,\ldots,k-1$, and $(\al_k,\be_k) = (e_{k-1},0)$, where $e_i := (0,\ldots,1,\ldots,0)$
(with $1$ in the $i$'th place and $0$ elsewhere).

\medskip
To prove this claim, first note that $(\al_i,\be_i) \leq (e_i,0),\, i=1,\ldots,k-1$, since $x_i$ is a monomial in the formal
expansion \eqref{eq:rootexp1}. Likewise, $(\al_k,\be_k) \leq (e_{k-1},0) < (e_i,0),\, i=1,\ldots,k-2$, since 
$b_k = -(b_1 +\cdots b_{k-1})$.

Suppose $(\al_\ell,\be_\ell) < (e_\ell,0)$, for some $\ell =1,\ldots,k-1$, or $(\al_\ell,\be_\ell) < (e_{k-1},0)$, for $\ell = k$.
This implies that $|\al_\ell| + \be_\ell < 1$, so that $\be_\ell < 0$ since $|\al_\ell| \geq 1$.

Now, $\prod_{i=1}^k b_i(w,u,x) = a_k(w,u,x) \in \IK\llb w,u,x\rrb$ and $\prod_i x^{\al_i} w^{\be_i}$ is a monomial in the formal expansion of the latter.
But, for all $i$, either $\be_i = 0$ or $\be_i <0$, and $\be_\ell < 0$, for some $\ell$, so that $w$ occurs to a negative
power in $\prod x^{\al_i} w^{\be_i}$; a contradiction. This proves the claim.

\medskip
It remains to show that $b_i (w, u, x) \in \IK\llbracket w, u, x\rrbracket$, $i=1,\ldots,k-1$. Suppose this is not true. Then
there is a monomial $x^\al w^\be \in \supp b_i$ with $\be<0$, for some $i=1,\ldots,k-1$.
Let $(\al_0,\be_0)$ denote the smallest $(\al,\be) \in \bigcup_{i=1}^{k-1} \supp b_i$ with $\be <0$.

For each $i=1,\ldots,k-1$, we can write
\begin{equation}\label{eq:roots2}
b_i(w,u,x) = x_i + b_{i\al_o\be_0}(u) x^{\al_0}w^{\be_0} + Q_i(w,u,x) + R_i(w,u,x),
\end{equation}
where 
\begin{enumerate}
\item $Q_i \in \IK\llbracket w, u, x\rrbracket$, and $|\al| + \be \geq 2$ for all $(\al,\be) \in \supp Q_i$;

\smallskip
\item $R_i \in \IK\llbracket w, u, x/w\rrbracket$ and, for every $(\al,\be) \in \supp R_i$, $\be < 0$ and 
$(\al_0,\be_0) < (\al,\be)$.
 \end{enumerate}
 
Let $h$ denote the smallest index $i=1,\ldots,k-1$ such that $b_{i\al_o\be_0}(u)$ is not the zero element of $\IK\llb u\rrb$;
i.e., such that $(\al_0,\be_0) \in \supp b_i$. Consider
the coefficient $a_{k-h+1}(w,u,x) \in \IK\llb w,u,x\rrb$ of $f$; of course, $a_{k-h+1} = \s_{k-h+1}(b_1,\ldots,b_k)$, where $\s_{k-h+1}$
denotes the elementary symmetric polynomial of order $k-h+1$.

We can use the identity 
\begin{multline*}
\s_{k-h+1}(\xi_1+y_1,\ldots,\xi_k +y_k)\\
= \s_{k-h+1}(\xi_1,\ldots,\xi_k) + \sum_{i=1}^k y_i\,\s_{k-h}(\xi_1,\ldots,\wxi_i,\ldots,\xi_k) +O(y^2),
\end{multline*}
setting $\xi_i = x_i,\, y_i = b_{i\al_o\be_0} x^{\al_0}w^{\be_0}$, for $i=1,\ldots,k-1$, and $\xi_k = -(\xi_1+\cdots +\xi_{k-1}),\,
y_k = -(y_1+\cdots +y_{k-1})$, to see that, in $\s_{k-h+1}(b_1,\ldots,b_k)$, the monomial $x^{\al_0}w^{\be_0}$ occurs, multiplied by
 \begin{align*}
 &\sum_{i=1}^{k-1}\left\{\s_{k-h}(x_1,\ldots,\wx_i,\ldots,x_{k-1},-(x_1+\cdots +x_{k-1})) - \s_{k-h}(x_1,\ldots, x_{k-1})\right\}\cdot b_{i\al_0\be_0}\\
 &\begin{multlined}[t]
 = \sum_{i=h}^{k-1}\left\{-(x_1+\cdots +x_{k-1}))\s_{k-h-1}(x_1,\ldots,\wx_i,\ldots,x_{k-1})\right.\\
  \left.\qquad\qquad+ \s_{k-h}(x_1,\ldots,\wx_i,\ldots,x_{k-1})
   -  \s_{k-h}(x_1,\ldots, x_{k-1})\right\}\cdot b_{i\al_0\be_0}\,.
   \end{multlined}
   \end{align*} 

Among the monomials $x^\al$ in the latter sum
(all of degree $|\al| = k-h$), the monomial
\begin{equation*}
x^{\ga_h} := x_{h+1}\dots x_{k-1}\cdot x_{k-1}
\end{equation*}
(or the monomial $x^{\ga_h} := x_{k-1}$, in the case $h=k-1$)
has the smallest exponent $\al$ (with respect to the lexicographic ordering of $\IN^{k-1}$), and
occurs uniquely in the summand where $i=h$.
Using the properties of \eqref{eq:roots2} above, we see that
\begin{equation*}
(\ga_h + \al_0,\,\be_0)\, \in\, \supp \s_{k-h+1}(b_1,\ldots,b_k)\,;
\end{equation*}
a contradiction.
\end{proof}

\begin{remark}\label{rem:pardes1}
As indicated in Section \ref{sec:split}, Corollary \ref{cor:clopen} is a consequence of Lemmas \ref{lem:p1} and \ref{lem:q0}.
Theorem \ref{thm:pardes} follows directly from Corollary \ref{cor:clopen}. Indeed, as in Section \ref{sec:pardes},
after blowing up using \cite{ATW19} until $S := \{\ATWinv=\ATWinv(\nc(k))\}$ is the
maximum locus of $\ATWinv$, it follows from Corollary \ref{cor:clopen} that $X$ is either $\nc(k)$ everywhere or $\nc(k)$ 
nowhere on each component of $S$. Then
we can again blow up to get rid of the components that are not $\nc(k)$, and simply finish by applying the
inductive hypothesis in the complement of $S$.

The purpose of the proof in Section \ref{sec:pardes} was to show how little is needed to deduce the Partial Desingularization 
Theorem \ref{thm:pardes} from the Splitting Theorem \ref{thm:split}.
\end{remark}

\bibliographystyle{amsplain}

\begin{thebibliography}{99}

\bibitem{ABTW}
D. Abramovich, A. Belotto da Silva, M. Temkin and J. W{\l}odarczyk,
\textit{Principalization on logarithmically foliated orbifolds},
preprint arXiv:2503.00926v2 [math.AG] (2025), 71 pages.

\bibitem{AT}
D. Abramovich and M. Temkin,
\textit{Partial desingularization in characteristic 0},
preprint arXiv:2602.15612v1 [math.AG] (2026), 4 pages.

\bibitem{ATW19}
D. Abramovich, M. Temkin and J. W{\l}odarczyk,
\textit{Functorial embedded resolution via weighted blowings up},
Alg. No. Theory \textbf{18} (2024), 1557--1587.

\bibitem{BB}
A. Belotto da Silva, and E. Bierstone,
\textit{Group-circulant singularities and partial desingularization preserving normal crossings},
preprint arXiv:2602.09114v1 [math.AG] (2026), 46 pages.

\bibitem{BMinv}
E. Bierstone and P.D. Milman,
\textit{Canonical desingularization in characteristic zero by
blowing up the maximum strata of a local invariant},
Invent. Math.
\textbf{128} (1997), 207--302.

\bibitem{BMfunct}
E. Bierstone and P.D. Milman,
\textit{Functoriality in resolution of singularities},
Publ. R.I.M.S. Kyoto Univ. \textbf{44} (2008), 609--639.

\bibitem{Nag1}
M. Nagata,
\textit{Remarks on a paper of Zariski on purity of branch-loci},
Proc. Natl. Acad. Sci. USA \textbf{44} (1958), 796--799.

\bibitem{Nag2}
M. Nagata,
\textit{On the purity of branch loci in regular local rings},
Illinois J. Math. \textbf{3} (1959), 328--333.

\bibitem{Wlodar}
J. W{\l}odarczyk,
\textit{Functorial resolution by torus actions}.
preprint arXiv:2203.03090v4 [math.AG] (2025), 50 pages.

\bibitem{WlodarNC}
J. W{\l}odarczyk,
\textit{Resolution except for the normal-crossing locus and Galois actions},
preprint arXiv:2602.14266v1 [math.AG] (2026), 38 pages.


\end{thebibliography}

\end{document}